\theoremstyle{plain}
\newtheorem{theorem}{Theorem}
\newtheorem{lemma}{Lemma}
\newtheorem*{moore}{Irregular Moore Bound}
\newtheorem{proposition}{Proposition}
\newtheorem{definition}{Definition}
\newtheorem*{ear}{Ear Decomposition}
\theoremstyle{definition}
\newtheorem{example}{Example}
\newcommand{\set}[1]{\left\{ #1 \right\}}
\newcommand{\size}[1]{\left| #1 \right|}
\newcommand{\ceil}[1]{\left\lceil #1 \right\rceil}
\newcommand{\floor}[1]{\left\lfloor #1 \right\rfloor}
\newcommand{\paren}[1]{\left( #1 \right)}
\newcommand{\ceg}{universal cyclic edge-connectivity}
\tikzset{vtx/.style={circle, inner sep=0pt, minimum size=0.22cm,fill=black}}
\DeclareMathOperator{\cegK}{\kappa^\prime_{\circ}}
\title{Spectral Threshold for Extremal  Cyclic Edge-Connectivity}
\author{Sinan G.\ Aksoy}
\address{Pacific Northwest National Laboratory \\ Seattle, WA, 98109}
\email{sinan.aksoy@pnnl.gov}
\author{Mark Kempton}
\address{Mathematics Department \\ Brigham Young University \\ Provo, UT, 84602}
\email{mkempton@mathematics.byu.edu}
\author{Stephen J.\ Young}
\address{Pacific Northwest National Laboratory \\ Richland, WA, 99354}
\email{stephen.young@pnnl.gov}
\date{\today}
\thanks{This work was supported by the High Performance Data Analytics (HPDA) program at Pacific Northwest National Laboratory.  Pacific Northwest National Laboratory is operated by Battelle Memorial Institute under Contract DE-ACO6-76RL01830. \textit{PNNL Information Release:} PNNL-SA-151831}
\begin{document}
\begin{abstract}
%\sinan{Needs rewrite}
The \ceg\ of a graph $G$ is the least $k$ such that there exists a set of $k$ edges whose removal disconnects $G$ into components where every component contains a cycle.  We show that for graphs of minimum degree at least 3 and girth $g$ at least 4, the \ceg\ is bounded above by $(\Delta-2)g$ where $\Delta$ is the maximum degree.  We then prove that if the second eigenvalue of the adjacency matrix of a $d$-regular graph of girth $g\geq4$ is sufficiently small, then the \ceg\ is $(d-2)g$, providing a spectral condition for when this upper bound on \ceg\ is tight.
\end{abstract}

\maketitle

\section{Introduction}
The traditional notion of graph edge-connectivity is the smallest $k$ such that there exists a set of edges $S \subseteq E(G)$ with $|S|=k$, where $G \setminus S$ is disconnected. Note that traditional edge-connectivity does not stipulate any conditions on properties of the components of $G \setminus S$. The notion of {\it conditional edge-connectivity}, introduced by Harary \cite{Harary1983}, extends the traditional edge-connectivity by stipulating that components of $G \setminus S$ satisfy some given property. More precisely:

\begin{definition}[Harary 1983 \cite{Harary1983}]
Let $P$ be any property of a graph $G=(V,E)$, and let $S \subset E(G)$. The {\it universal $P$-connectivity} is the minimum $|S|$ such that $G \setminus S$ is disconnected, and every component of $G \setminus S$ has property $P$.
\end{definition}

We note there are several formulations of conditional connectivity; for instance, the qualifier {\it universal} reflects that {\it every} component of $G \setminus S$ has $P$, whereas {\it existential} conditional connectivity relaxes this condition to {\it some} component satisfying $P$.
{Harary's introduction of conditional connectivity focused on surveying a wide swath of possible properties, including planarity, degree and diameter restrictions, and the property of being cyclic, Hamiltonian, or Eulerian. 
Aruguing that ``the most fruitful topics are those suggested by applications", Harary aimed to provide a framework for devising connectivity concepts that are meaningful in applications. In particular, Harary references the pertinence of conditional connectivity to the analysis of
computer network reliability \cite{boesch1976large}, and VLSI and separator problems \cite{Lipton1979}, among others.
For instance, he notes that guarantees on connected component sizes reflect the resilience of computer networks to disruptions.
}

% As discussed in \cite{Harary1983}, Harary introduced conditional connectivity with explicit hopes of providing a framework for devising connectivity concepts that are meaningful in applications. Indeed, Harary notes that conditional connectivity for various properties has naturally arisen in areas such as computer network reliability \cite{boesch1976large}, VLSI and separator problems \cite{Lipton1979}, among others. 

In this work, we consider the universal $P$-edge-connectivity, where $P$ is the property of containing a cycle. We call this the \ceg\ of $G${{, which we denote by $\cegK(G)$}}. Prior to Harary's work, Bollobas alludes to \ceg\ \cite[p.~113]{bollobas2004extremal}, and Harary proved the \ceg\ of the balanced complete bipartite graph $K_{n,n}$ is $n^2-2n$ for $n$ even. The cycle condition is also natural for a number of applications, such as network reliability, as the existence of a cycle is necessary to guarantee multiple paths between pairs of vertices. { Cyclic edge-connectivity has also been utilized as a condition to solve other problems, such as in integer flow conjectures \cite{fan1992integer}, and early attempts \cite{tait1880remarks} at proving the four color theorem. Lastly, cyclic-edge connectivity has also garnered interest due to its close relationships with other types of connectivity \cite{latifi1994conditional, peroche1983several} and so-called $n$-extendable graphs \cite{plummer1980n}. }

{ Before proceeding, we note universal cyclic edge-connectivity is equal to several other notions of connectivity. Instead of requiring {\it every} component of $G\setminus S$ have a cycle, a number of researchers (cf. \cite{liang2020distributed, lou2005efficient, lou2008characterization}) define cyclic edge-connectivity has the smallest edge cut set $S$ such that {\it at least 2} components possess a cycle. Defined in this way, while a cyclic edge cut need not be a universal cyclic edge cut, it is nonetheless straightforward\footnote{Suppose that $S$ is a minimal cyclic edge cut that is not universal.  Then $G\setminus S$ has a tree component $T$ as well as two components $C_1$ and $C_2$ that contain a cycle.  Now there is some edge $e \in S$ which is incident to $T$.  Consider the edge cut $S' = S\setminus e$.  Since $e$ is not incident between $C_1$ and $C_2$, $G \setminus S'$ still has two distinct components, $C_1'$ and $C_2'$ which contain $C_1$ and $C_2$ as subgraphs, respectively.  But then $S'$ is a cyclic edge cut, a contradiction.} to show this notion of cyclic-edge connectivity is equal to universal cyclic edge-connectivity. Despite this equivalence, we still utilize the universal formulation of cyclic edge-connectivity because restricting the permissible cuts affords advantages in our proof techniques. Lastly, we note other notions of edge-connectivity are equivalent to cyclic edge-connectivity for certain parameter settings, or for certain families of graphs. For instance, Latifi et.~al.~\cite{latifi1994conditional} propose a new measure of conditional connectivity for large multiprocessor systems, requiring every vertex have degree $k$ in $G \setminus S$. As observed in \cite{wang2009cyclic}, when $k=2$ and the minimum degree is at least 3, this quantity is equal to cyclic edge-connectivity. For more on the relationship between cyclic edge-connectivity and other connectivity measures, see \cite{peroche1983several}. }

% \sinan{An alternative to universal is two component...cue literature review on cyclic edge connectivity. This gives different cuts, but same value. Our method affords advantages in proof by reducing space of cuts.}

We take a spectral approach for studying universal cyclic edge-connectivity. We prove that for a $d$-regular graph with girth $g$, with $d\geq5$, a bound on the second eigenvalue of the adjacency matrix is sufficient to guarantee that $\cegK(G)=(d-2)g$, which is the largest possible \ceg\ (see Theorems \ref{T:upper} and \ref{thm:main} below). Furthermore, we construct a family of $d$-regular graphs that show our spectral condition is necessary. Prior work has established a number of spectral bounds for traditional edge and vertex connectivity; see { \cite{Abiad2018,cioabua2010eigenvalues,fiedler1973algebraic}} and the references contained therein. { Furthermore, researchers have also investigated connectivity and conditional connectivity with restrictions on component sizes for many families of graphs \cite{cioabua2012conjecture,cioabua2014disconnecting,fabrega1994extraconnectivity,liu2010existence,zhang2019cyclic}}. 
However, {spectral} bounds on conditional connectivity appear far more rare.
{ One exception, however, is recent work by Zhang \cite{zhang2019cyclic}, who makes use of spectral tools to determine the cyclic edge-connectivity of strongly regular graphs. 
Despite the pervasive use of spectral methods as a proof technique, Zhang's approach does not immediately lead to an eigenvalue condition for cyclic edge-connectivity.
Focusing on the more general case of $d$-regular graphs, our bound establishes a spectral threshold for extremal cyclic edge-connectivity, depending on girth and degree. We note all strongly-regular graphs which are at least 5-regular satisfy the condition of our theorem.  It is interesting to note, that all 4-regular strongly regular graphs also satisfy the stated spectral threshold even though not satisfying our degree condition.
}

%Most strongly regular graphs satisfy the conditions of our main theorem, and thus our results imply Zhang's for a wide variety of graphs. 

%We note that for most strongly regular graphs, the result of Zhang is an easy consequence of our result. 

%However, {spectral} bounds on conditional connectivity appear far more rare and, to our knowledge, no such results exist for \ceg. 

\section{Main tools and notation}
For a graph $G=(V,E)${, let $\cegK(G)$ denote the cyclic edge-connectivity of $G$. For} vertex subsets $X,Y\subseteq V$, let $E(X,Y)$ denote the set of edges between $X$ and $Y$, and let $e(X,Y):=|E(X,Y)|$. Further, let $G[X]$ denote the subgraph induced by $X$. {One of our primary tools will be the following well-known lemma from \cite{Mohar:LapEig}.}
%A primary tool we will use is the discrepancy inequality, also sometimes called the expander mixing lemma. 

%\begin{EML}[Alon and Chung~\cite{Alon1988}]
%Let $G=(V,E)$ be a $d$-regular, $n$-vertex graph with second largest adjacency eigenvalue $\lambda$. For all $X,Y \subseteq V$, 
%\[
%\left|e(X,Y) -\frac{d|X||Y|}{n}\right| \leq \lambda \sqrt{|X||Y|\left(1-\frac{|X|}{n}\right)\left(1-\frac{|Y|}{n}\right)}.
%\]
%\end{EML}

{
\begin{lemma}[Lemma 3.1 of \cite{Mohar:LapEig}]\label{lem:edgebound}
Let $G$ be a graph of order $n$, $X\subseteq V$, and $\lambda_2$ the second largest eigenvalue of the adjacency matrix.  Then \[e(X,\bar X) \geq (d-\lambda_2)\frac{|X|(n-|X|)}{n}.\]
\end{lemma}
}

The other key ingredient of the proof is the following theorem of Alon, Hoory, and Linial which provides a lower bound for the number of vertices in a graph with a given average degree and girth.  This result may be thought of as an irregular generalization of the result of Moore~(see \cite[p.~180]{Biggs:AGT}), which lower bounds the number of vertices in a $d$-regular graph of a given diameter.
\begin{moore}[Alon, Hoory, Linial \cite{Alon2002}]
%Alon, et al. theorem on size of large girth graphs with average degree d.  
The number of vertices $n$ in a graph of girth $g$ and average degree at least $d\geq 2$ satisfies $n \geq n_0(d, g)$ where
%\begin{align*}%\label{eq:n0}
\[ n_0(d,g)=\begin{cases} \displaystyle 1+d\sum_{i=0}^{r-1}(d-1)^i & \mbox{ if } g=2r+1 \\
\displaystyle 2\sum_{i=0}^{r-1}(d-1)^i & \mbox{ if } g=2r\end{cases}.
\]%\end{align*}
\end{moore}

To demonstrate the applicability of these tools, we first obtain the following na\"{i}ve spectral bound on conditional edge-connectivity in terms of girth when conditioned on the size of the minimal component. 

\begin{proposition}\label{P:trivial}
Let $\gamma$ be the size (number of edges) of the smallest edge-cut of a $d$-regular graph { with second largest adjacency eigenvalue $\lambda_2$} which results in components of size at least $k$, and let the girth be $g$. Then
\[
\gamma \ge k(d-{\lambda_2})\left(1-\frac{k}{n_0(d,g)}\right),
\]
{where $n_0(d,g)$ is as in the Irregular Moore Bound.}
\end{proposition}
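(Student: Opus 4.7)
The plan is to apply the discrepancy inequality to a smallest component of an optimal cut and then invoke the irregular Moore bound to control $n$.

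To begin, I fix an edge set $S$ with $|S|=\gamma$ realizing the minimum cut, and let $X$ be a component of $G\setminus S$ of smallest size. Since there must be at least two components and each has at least $k$ vertices, $k \leq |X|\leq n/2$; moreover, every edge leaving $X$ lies in $S$, so $e(X,V\setminus X)\leq\gamma$.

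Next, I apply the discrepancy inequality with $Y=V\setminus X$. A short calculation collapses the square-root factor,
\[
\sqrt{|X|(n-|X|)\left(1-\tfrac{|X|}{n}\right)\left(1-\tfrac{n-|X|}{n}\right)} \;=\; \frac{|X|(n-|X|)}{n},
\]
so that
\[
\gamma \;\geq\; e(X,V\setminus X) \;\geq\; (d-\lambda)\,\frac{|X|(n-|X|)}{n} \;=\; (d-\lambda)\,|X|\left(1 - \frac{|X|}{n}\right).
\]
Since the map $s\mapsto s(1-s/n)$ is increasing on $[0,n/2]$ and $k\leq |X|\leq n/2$, I replace $|X|$ by $k$ on the right-hand side to obtain
\[
\gamma \;\geq\; (d-\lambda)\,k\left(1 - \frac{k}{n}\right).
\]

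Finally, I invoke the irregular Moore bound $n \geq 2\sum_{i=0}^{r-1}(d-1)^i$ to eliminate $n$ in favor of an explicit expression in $d$ and $r$, arriving at the denominator $(d-1)^r-2$ appearing in the statement.

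The main obstacle is this last step: the Moore bound and the target denominator $(d-1)^r - 2$ are not immediately comparable across all $d$, so some additional care is required---perhaps splitting into small-$d$ and large-$d$ cases, restricting to the regime $k\leq (d-1)^r-2$ where the inequality is nontrivial, or applying the Moore bound to the induced subgraph $G[X]$ (which has girth at least $2r$ and average degree at least $d - \gamma/|X|$) rather than to $G$ itself---in order to match the precise form claimed.
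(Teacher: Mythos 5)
Your argument is essentially identical to the paper's: minimality of the cut gives a set $X$ with $\gamma = e(X,\overline{X})$, the Discrepancy Inequality gives $\gamma \ge (d-\lambda)\frac{|X|(n-|X|)}{n}$, and the Irregular Moore Bound is then invoked to control $n$; your intermediate steps (the monotonicity of $s\mapsto s(1-s/n)$ on $[0,n/2]$) are ones the paper leaves implicit. The obstacle you flag at the end is real and is not resolved in the paper either, which simply asserts that the Moore bound ``immediately yields the desired result'': to obtain the denominator $(d-1)^r-2$ one needs $n \ge (d-1)^r-2$, whereas the Moore bound gives $n \ge 2\frac{(d-1)^r-1}{d-2}$, and this is smaller than $(d-1)^r-2$ whenever $d\ge 5$ and $r\ge 2$ (e.g.\ $d=5$, $r=2$ gives $10$ versus $14$). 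So your proof is as complete as the paper's, and your closing caveat correctly identifies that the stated constant only follows directly from this argument for $d\le 4$; for larger $d$ the bound as written appears to need either a restriction or a corrected denominator such as $2\frac{(d-1)^r-1}{d-2}$.
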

\begin{proof}
We first note that by the minimality of the cut, there is some set $X$ such that $\gamma = e(X,\overline{X})$.  Thus, 
by { Lemma \ref{lem:edgebound}}, %the Discrepancy Inequality,  
we have that
\[
\gamma = e(X,\bar X) \geq %\frac{d|X|(n-|X|)}{n}-\lambda\sqrt{|X|(n-|X|)\left(1-\frac{|X|}{n}\right)\left(1-\frac{n-|X|}{n}\right)}=
(d-{\lambda_2})\frac{|X|(n-|X|)}{n}.
\]
Applying the aforementioned Irregular Moore Bound of Alon, Hoory, and Linial~\cite{Alon2002} immediately yields the desired result.
\end{proof}

The form of this lower bound is quite close to the trivial upper bound on $\gamma$, $k(d-2)+2$, stemming from the case where $G[X]$ is a $k$-vertex tree. In some sense, this extremal example makes \ceg\ the natural strengthening of the size-limited connectivity.

The final tool we will utilize to study the \ceg\ of $d$-regular graphs is the {\it ear decomposition} of a graph, which as stated in \cite{Bondy2010}, may defined as follows. 
\begin{ear}
For a subgraph $F$ of $G$, an {\it ear} of $F$ in $G$ is a nontrivial path in $G$ whose ends lie in $F$ but whose internal vertices do not. An {\it ear decomposition} of a 2-edge-connected graph $G$ is a nested sequence $(G_0,G_1,\dots,G_k)$ of subgraphs of $G$ such that
\begin{itemize}
    \item[$(i)$]  $G_0$ is a cycle,
    \item[$(ii)$]  $G_{i+1}=G_i \cup P_i$, where $P_i$ is an ear of $G_i$ in $G$, for $0\leq i\leq k$,
    \item[$(iii)$] $G_k=G$.
\end{itemize}
\end{ear}

\section{Cyclic Edge-Connectivity}\label{S:cyclic}

Before proceeding with our main result, we first address the existence of and upper bounds on \color{black} cyclic-edge connectivity. Indeed, for acyclic or unicyclic graphs, it is clear the cyclic edge connectivity does not exist. The work of Lov\'{a}sz~\cite{lovasz65} and Dirac~\cite{dirac63} provide a complete characterization of the class of graphs with no pairs of vertex disjoint cycles.  Roughly speaking, these are graphs obtained from $K_5$, a wheel, and $K_{3,t}$ plus any subset of edges connecting vertices in the three element class, and a forest plus a dominating vertex by the duplication and subdivision of edges and the addition of trees.  However, as no assurances are provided on the relative sizes of these cycles, these works do not yield an upper bound on the \ceg\ even in the $d$-regular case.  Lou and Holton \cite[Lemma 1]{lou1993lower} provide a straight-forward argument that for $d$-regular girth $g$ graphs, the cyclic edge connectivity is bounded above by $(d-2)g$.  However, their approach does not appear to generalize to the irregular graphs.  To address this, we provide in Lemma \ref{L:upper} explicit conditions for the existence of a girth-length cycle which induces a universal cyclic edge cut.

\color{black}
\begin{lemma}\label{L:upper}
Let $G$ be a graph with minimum degree $d \geq 3$ and girth $g \geq 4$.  If $G$ is not $K_{3,t}$, then there exists a cycle $C$ of length $g$ such that every component of $G - E(C,\overline{C})$ contains a cycle.
\end{lemma}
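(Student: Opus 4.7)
The plan is to proceed by cases on $g$ and $d$, using the following uniform observation. Fix any girth cycle $C$ in $G$; for $v \in V \setminus V(C)$, a short cycle argument yields $\size{N(v) \cap V(C)} \leq 1$ when $g \geq 5$ (else two neighbors of $v$ on $C$ would close a cycle of length at most $\floor{g/2} + 2 < g$) and $\size{N(v) \cap V(C)} \leq 2$ when $g = 4$ (with the two neighbors necessarily non-adjacent on $C$, lest a triangle form). Consequently, when $g \geq 5$ each $v \in V \setminus V(C)$ has $\deg_{G[V \setminus V(C)]}(v) \geq d - 1 \geq 2$, and when $g = 4$ with $d \geq 4$ it has $\deg_{G[V \setminus V(C)]}(v) \geq d - 2 \geq 2$; in either subcase every component of $G[V \setminus V(C)]$ has minimum degree at least $2$ and contains a cycle, so any girth cycle works and the $K_{3,t}$ exclusion is unneeded.

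The substantive case is $g = 4$ and $d = 3$. Here a vertex $v \in V \setminus V(C)$ has degree $1$ in $G[V \setminus V(C)]$ precisely when $\deg_G(v) = 3$ and the two neighbors of $v$ on $C = x_1 x_2 x_3 x_4$ form a diagonal of $C$; call such a vertex a \emph{chord} of $C$, of type A if its $C$-neighbors are $\set{x_1, x_3}$ and of type B if $\set{x_2, x_4}$. If no vertex of $V \setminus V(C)$ is a chord of $C$, then $C$ is good, and otherwise the leaves of any tree component $K$ of $G[V \setminus V(C)]$ are all chords. Assuming for contradiction that no girth cycle is good, I would fix a bad pair $(C, K)$, pick a leaf $v$ of $K$ (without loss of generality of type A), and analyze the swapped 4-cycle $C' = v x_1 x_2 x_3$ obtained by replacing $x_4$ with $v$ (which must itself be bad). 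Combining the girth-$4$ prevention of triangles with a count of edges between $K$ and $C$, I would pin down $K$: either $K$ is a single edge between one type-A and one type-B leaf (the $K_{3,3}$ scenario), or $K$ is a star whose leaves are all of one type and whose center is adjacent to both vertices of the opposite diagonal pair (the $K_{3,t}$ scenario for $t \geq 4$).

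Once $K$'s structure is pinned down, the induced subgraph on $V(C) \cup V(K)$ is readily seen to be a complete bipartite $K_{3, t}$. The final step is to exclude any further vertex of $G$: any such vertex would lie in another tree component of $G[V \setminus V(C)]$ whose chord leaves also attach into $C$, and iterating the swap-and-count argument forces the entire graph to be $K_{3, t}$, contradicting the hypothesis. The main obstacle is this structural analysis of $K$ in the non-regular case, where cycle vertices can have arbitrarily high degree and $K$ may be much richer than the single edge found in the $K_{3,3}$ analysis; navigating the possibilities requires careful combined use of the swap-and-count argument and the girth-$4$ triangle-avoidance.
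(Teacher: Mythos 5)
Your reduction of the easy cases is correct: for $g\geq 5$ every vertex outside a girth cycle $C$ has at most one neighbor on $C$, and for $g=4$ with minimum degree at least $4$ it has at most two (necessarily a diagonal pair), so in both cases $G[V\setminus V(C)]$ has minimum degree at least $2$ and every component contains a cycle. The first of these is exactly the paper's argument; the second is a clean observation the paper does not isolate. The problem is the remaining case, $g=4$ with degree-$3$ vertices present, where your proof is a plan rather than an argument, and its central claim does not hold as stated. You assert that the badness of $C$ and of one swapped cycle $C'$ ``pins down'' each tree component $K$ as either a single type-A/type-B edge or a monochromatic star whose center sees the opposite diagonal. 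That dichotomy is false for a bad pair. Take $V=\set{x_1,x_2,x_3,x_4,v_1,v_2,v_3,v_4}$ with the $4$-cycle $x_1x_2x_3x_4$, the path $v_1v_2v_3v_4$, and attachments $v_1\sim x_1,x_3$; $v_4\sim x_2,x_4$; $v_2\sim x_2$; $v_3\sim x_1$. This graph is triangle-free with minimum degree $3$ and is not $K_{3,t}$; the cycle $C=x_1x_2x_3x_4$ is bad with $K$ equal to the $4$-vertex path $v_1v_2v_3v_4$ (whose leaves are chords of opposite types), which is neither a single edge nor a star, and the swapped cycle $v_1x_1x_2x_3$ is also bad. (The lemma survives because a different girth cycle, e.g.\ $x_3v_1x_1x_4$, is good.) So the classification you need can only come from exploiting the badness of \emph{many} girth cycles simultaneously, and you give no mechanism for doing so; this is where essentially all of the work lies.

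The endgame has a second gap: you exclude further vertices by asserting they ``would lie in another tree component of $G[V\setminus V(C)]$,'' but they may equally well lie in components that already contain cycles, or extend the component containing $K$ beyond its $K_{3,t}$ core. In that situation $G$ is genuinely not $K_{3,t}$, there is no contradiction to reach, and you must instead exhibit a good girth cycle that is not one of the swaps of $C$. This is exactly where the paper spends most of its effort: it abandons the ``fix a girth cycle'' viewpoint, fixes two vertices $x,y$ with at least two common neighbors $Z$, classifies the components of $G-\set{x,y}$ (trees meeting $Z$, components hanging only off $x$ or only off $y$, etc.), and in each configuration produces an explicit good $4$-cycle, typically of the form $\set{x,z,y,z'}$ with $z,z'\in Z$, with $K_{3,t}$ emerging as the unique obstruction. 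Your approach is a genuinely different organization and might well be completable, but as written the two key steps --- the structural classification of $K$ and the exclusion of additional vertices --- are unsupported.
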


As an immediate corollary we have the following.

\begin{theorem}\label{T:upper}
Let $G$ be a graph with minimum degree at least 3 and girth $\geq 4$. If $G$ is not $K_{3,t}$, then $\cegK(G) \leq \paren{\Delta-2}g$ where $\Delta$ is the maximum degree of $G$.
\end{theorem}
\color{black}

\begin{proof}[Proof of Lemma \ref{L:upper}]
We first consider the case that $g \geq 5$ and let $C$ be a cycle of length $g$.  
As $C$ is an induced cycle it is clear that $G[C]$ contains a cycle.  Now consider an arbitrary component $G[X]$ of $G - E(C,\overline{C})$ and let $x \in X$.  As $g \geq 5$, the vertex $x$ has at most one neighbor in $C$ as otherwise there exists a cycle of length at most $\floor{\frac{g}{2}} + 1 < g$.   But then the minimum degree of $G[X]$ is 2 and  hence it contains a cycle. 

Now suppose that $G$ is a graph with girth $4$ and is not $K_{3,t}$.  Let $x,y$ be vertices of $G$ such that the set of common neighbors, $Z = \set{z_1,\ldots, z_k}$ has size at least 2.  The existence of such a pair of points is guaranteed as the girth is 4 and there is an induced 4-cycle in $G$.  It is worth mentioning that, since the girth of $G$ is 4, none of the vertices adjacent to a vertex in $Z$ are adjacent to either $x$ or $y$.  We now consider the component structure of $G - \set{x,y}$.  Let $Z_1, \ldots, Z_{k_z}$ be the vertex sets for components that contain an element of $Z$ and let $X_1,\ldots, X_{k_x}$, (respectively $Y_1, \ldots, Y_{k_y}$) be the vertex sets for the components such that $E(X_i,\set{x}) \neq \varnothing$ and $X_i \cap Z = \varnothing$ (respectively, $E(Y_i,\set{y}) \neq \varnothing$ and $Y_i \cap Z = \varnothing$).  We note that the collection of vertex sets $\set{X_1,\ldots X_{k_x}}$ and $\set{Y_1, \ldots, Y_{k_y}}$ are not necessarily distinct, however, this potential duplicate naming will not affect our subsequent analysis. We will show the desired cycle is in $\set{x,y} \cup \bigcup_i Z_i$.  We note the components induced by any $X_i$ or $Y_j$ do not provide an obstruction. This is easy to see as by definition any vertex in $X_i$ or $Y_j$ is incident to precisely one of $\set{x,y}$ as otherwise it would belong to $Z$.  Thus $G[X_i]$ and $G[Y_j]$ have minimum degree at least 2, and thus, contain a cycle. As a consequence, we may restrict our attention to the components $G[Z_1], \ldots, G[Z_{k_z}]$ without loss of generality. 

%Ideally, the choice of any $z,z' \in Z$ would yield a cycle $C = \set{x,z,y,z'}$ such that the all the components of $G - C$ contain cycles, and thus providing the desired cycle.  While we will not be able to show this ideal case, we will in fact be able to restrict ourselves to cycles lying in $\set{x,y} \cup \bigcup_i Z_i$.  Thus, we will first show that the components induced by any $X_i$ or $Y_j$ do not provide an obstruction.  However, this is easy to see as by definition any vertex in $X_i$ or $Y_j$ is incident to precisely one of $\set{x,y}$ as otherwise it would belong to $Z$.  Thus $G[X_i]$ and $G[Y_j]$ have minimum degree at least 2, and thus, contain a cycle.

 We first consider the case where one of the components, say $T = G[Z_1]$, is a tree.  As the minimum degree of $G$ is 3 and $Z$ contains every vertex adjacent to both $x$ and $y$, the leaves of $T$ are given by $Z_1 \cap Z.$  Let $z,z'$ be two vertices of maximum distance in $T$ and let the unique path between them be given by $z = t_0, t_1, \ldots, t_{\ell}, t_{\ell+1} = z'$.  We first consider the case where $\ell \geq 2$ and so $t_1$ and $t_{\ell}$ are distinct vertices.  We note that $E(\set{x,y},\set{t_1,t_{\ell}}) = \varnothing$ as both $t_1$ and $t_{\ell}$ are incident to elements of $Z$.  Thus $t_1$ and $t_{\ell}$ have degree at least 3 in $T$ and thus there are vertices $t_1'$ and $t_{\ell}'$ that are incident to $t_1$ and $t_{\ell}$, respectively.  By the maximality of the distance between $z$ and $z'$ in $T$ and the uniqueness of the shortest path in trees, $t_1'$ and $t_{\ell}'$ are also leaves and hence in $Z$.  But then consider the components of $G -C$, where $C$ is the cycle $\set{y,z,t_1,t_1'}$.  Note that $\set{x,z',t_{\ell},t_{\ell}'}$ is a cycle and disjoint from $\set{y,z,t_1,t_1'}$ and thus is present in $G - C$.  Furthermore, the components of $T - \set{z,t_1,t_1'}$ as well as the components $G[Z_2], \ldots, G[Z_{k_z}]$ are all incident to $x$ and thus form a single component which contains the cycle $\set{x,z',t_{\ell},t_{\ell}'}$.  Hence, $C$ is the desired cycle.

Thus we may now assume that any tree component among $G[Z_1], \ldots, G[Z_{k_z}]$ has diameter 2 and a unique vertex not in $Z$.  Suppose $G[Z_1]$ is such a component and let $v$ be the unique element of $G[Z_1]$ not in $Z$.  Since $v$ is adjacent to an element of $Z$, we have that $v$ is not adjacent to $\set{x,y}$, has degree at least 3, and $\set{x,y,v} \cup (Z_1 \cap Z)$ induces copy of $K_{3,t}$ in $G$.  As $G$ is not equal to $K_{3,t}$, this implies that one of $Z_2$, $X_1$, or $Y_1$ exists and is not empty.  Suppose first that $X_1$ exists and let $z,z' \in Z_1 \cap Z.$  Consider the components of $G - \set{y,z,v,z'}$.  As every element of $Z - \set{z,z'}$ is incident to $x$, $G[X_1]$ contains a cycle, and there is an edge between $X_1$ and $x$, we have that every component formed contains a cycle. A similar argument holds if $Y_1$ exists by exchanging the roles of $x$ and $y.$  Finally, assume that no components of the type $X_i$ or $Y_j$ exist, but $Z_2$ exists. Let $z,z' \in Z \cap Z_1$ and consider the components of $G - \set{y,z,v,z'}.$   As all the components $Z_3,\ldots, Z_{k_z}$ as well as the vertices of $Z_1  - \set{z,z'}$ are connected to $x$, in order to show that the all the components of $G -\set{y,z,v,z'}$ have a cycle it suffices to show that $G[Z_2 \cup \set{x}]$ contains a cycle.  To that end, if $\size{Z_2 \cap Z} = 1$, then every vertex in $Z_2 - Z$ is adjacent to at most one vertex in $\set{x,y} \cup Z$ and hence $G[Z_2 - Z]$ has minimum degree 2 and a cycle. Otherwise, $\size{Z_2 \cap Z} \geq 2$ and $G[ Z_2 \cup \set{x}]$ contains a cycle by taking a path between distinct elements of $Z$ in $G[Z_2]$ joined by the vertex $x$.  

At this point, we may assume without loss of generality that $G[Z_i]$ is not a tree for all $i$. Suppose that the induced graph $G[\bigcup_i Z_i]$ has at least two vertices of degree 1, $z,z'$.  Then as $z$ and $z'$ are on no cycles in $G[\bigcup_i Z_i]$, the components of $G[\bigcup_i Z_i - \set{z,z'}]$ all contain cycles.  This gives that $\set{x,z,y,z'}$ is the desired cycle $C$.  Thus we may assume that there is at most 1 vertex of degree 1 in $G[\bigcup_i Z_i]$, which we call $z$, and let $z' \in Z - \set{z}$.  As the induced subgraph $G[\bigcup_i Z_i - \set{z,z'}]$ has at most one vertex of degree 1 (potentially the unique common neighbor of $z$ and $z'$), all of its components contain a cycle and again $\set{x,z,y,z'}$ is the desired cycle.  
We may now assume that for $i$, the induced subgraph $G[Z_i]$ is not a tree and every element of $Z$ has degree at least 2 in the relevant component.  Suppose that $k_{z} \geq 2$ and let $z \in Z_1 \cap Z$, $z' \in Z_2 \cap Z$.  Every vertex in $Z_1 - Z$ is adjacent to at most one of $\set{x,y,z,z'}$ and so has degree at least 2 in $G[Z_1 - \set{z,z'}]$, while the elements of $Z\cap Z_1 - \set{z,z'}$ are incident to neither of $z$ or $z'$, and thus also have minimum degree 2.  This implies that $G[Z_i - \set{z,z'}]$ has minimum degree 2 for all $i$ and thus contains a cycle, and hence $\set{x,z,y,z'}$ is the desired cycle.

Finally, we may now assume that there is a single component $Z_1$ of $G - \set{x,y}$ that contains all elements of $Z$ and further, that component is not a tree and every element of $Z \cap Z_1$ has degree at least two in the component. Now fix two elements $z,z' \in Z$ and and let $F$ be the forest of tree components of $G[Z_1 - \set{z,z'}]$.  Clearly if $F$ is empty, then $\set{x,z,y,z'}$ is the desired cycle.  Thus we may assume that $F$ is non-empty and let $L$ be the leaves of $F$.  As $Z$ is an independent set and every vertex of $z$ has degree at least 2 in $G[Z_1]$, we note that $L \cap Z = \varnothing$.  Further, as the minimum degree is at least 3, any $\ell \in L$ is adjacent to at least two of $\set{x,y,z,z'}$.  As $\ell \not\in Z$, it can not be adjacent to both $x$ and $y$.  Additionally, $\ell$ can not be adjacent to one of  $\set{x,y}$ and one of $\set{z,z'}$ as this forms a triangle.  Thus every element of $L$ is adjacent to both $z$ and $z$'.  Now suppose that there is some tree $T \in F$ such that $E(T,\set{x,y}) = \varnothing$ and let $\ell,\ell'$ be leaves of that tree.  But then, $z$ and $z'$ are antipodal points in a 4-cycle such that their common neighbors are in distinct components of $G - \set{z,z'}$.  Specifically, $\ell,\ell'$ are in a different component than $\set{x,y}$ and thus by previous arguments the desired cycle exists.  Thus we may assume that every component of $F$ is adjacent to either $x$ or $y$.  But then, if $\size{Z} \geq 4$, we have that for any two leaves $\ell,\ell' \in L$, the cycle $\set{z,\ell,z',\ell'}$ is the desired cycle.  Specifically, if $\bar{z},\bar{z}' \in Z - \set{z,z'}$ then the tree components of $G - \set{x,y,z,z',\ell,\ell'}$ are all connected to the cycle $\set{x,\bar{z},y,\bar{z}'}$ via either $x$ or $y$.  To complete the proof we note that the common neighbors of $z$ and $z'$ include $\set{x,y}$ and $L$, and thus have at least 4 elements.  Thus, by repeating the arguments above with $\set{z,z'}$ in the role of $\set{x,y}$, we may assume that $\size{Z} \geq 4$ as needed.
\end{proof}

One might hope Lemma \ref{L:upper} could be extended to girth 3 graphs with a similarly small set of exceptions as the girth 4 case. However, it is relatively easy to identify infinite families of counterexamples  from the work of Lov\'asz~\cite{lovasz65} and Dirac~\cite{dirac63}\color{black}. For example, the wheel graph on (see Figure \ref{F:3girth}) on any number of vertices forms a counterexample as every 3-cycle in the graph involves the central hub as well as an edge from the from the outer cycle.  Thus the removal of a 3-cycle destroys every cycle in the graph.  Another infinite family of counterexamples can be formed by taking $K_{3,t}$ and adding a non-empty set of edges to the partition of size 3.  In this case, every 3-cycle uses 2 of the vertices of partition of size 3 and thus because the graph is bipartite there are not enough vertices remaining on that side to form a cycle (see Figure \ref{F:3girth}).  

 In fact, by adding relatively few vertices it is possible to transform an arbitrary triangle-free graph into a counterexample to Lemma \ref{L:upper} where the \ceg\ exists.  Specifically, let $G$ be an arbitrary triangle-free graph and let $T$ be a tree with at least 3 leaves and minimum non-leaf degree 3.  Adding two adjacent vertices $c,c'$ which are connected to all the leaves of $T$ and an arbitrary independent set $S$ in $G$, results in a graph $G'$ in which every triangle uses the edge $\set{c,c'}$ and a vertex in $S$ or a leaf of $T$.  Thus, deleting the edges incident to any triangle in $G'$ results a cycle-free component, namely $T$ or $T$ with a single leaf removed.\color{black}
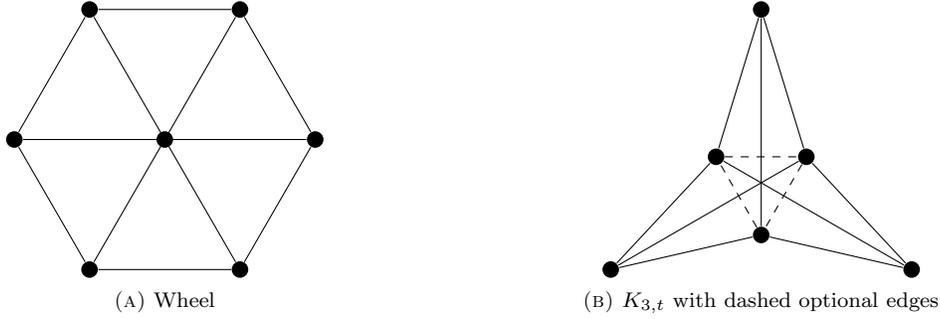
\begin{figure}
    \centering
    \hfill
    \subfloat[][Wheel]{
    \qquad\qquad
    \begin{tikzpicture}
    \node[vtx] (a) at (1,0) {};
    \node[vtx] (b) at (3,0) {};
    \node[vtx] (c) at (4,1.73) {};
    \node[vtx] (d) at (3,3.46) {};
    \node[vtx] (e) at (1,3.46) {};
    \node[vtx] (f) at (0,1.73) {};
    \draw (a) -- (b) -- (c) -- (d) -- (e) -- (f) -- (a) -- (d);
    \draw (b) -- (e);
    \draw (c) -- (f);
    \node[vtx] (H) at (2,1.73) {};
    \end{tikzpicture}
    \qquad\qquad
    }
    \hfill
    \subfloat[][$K_{3,t}$ with dashed optional edges]{
    \qquad\qquad
    \begin{tikzpicture}
    \node[vtx] (A) at (0,0) {}; 
    \node[vtx] (B) at (4,0) {};
    \node[vtx] (C) at (2,3.46) {};
    \node[vtx] (a) at (1.4,1.5) {};
    \node[vtx] (b) at (2.6,1.5) {};
    \node[vtx] (c) at (2,0.46) {};
    \draw (A) -- (c) -- (B) -- (b) -- (C) -- (a) -- (A);
    \draw[dashed] (a) -- (b) -- (c) -- (a);
    \draw (A) -- (b);
    \draw (B) -- (a);
    \draw (C) -- (c);
    \end{tikzpicture}
    \qquad\qquad
    }
    \hfill\phantom{}
    \caption{Examples from two infinite families of girth 3 graphs where the \ceg\ does not exist.  \label{F:3girth}}
\end{figure}
However, with the added restriction that the graph is $d$-regular, the only counterexamples the authors are aware of are $K_4$ and $K_5$.  Thus it is possible there is a finite set of exceptions for a $d$-regular, girth 3, version of Lemma \ref{L:upper}.

\begin{theorem} \label{thm:main}
{ Let $G$ be a $d$-regular graph with $d \geq 5$ and second largest adjacency eigenvalue $\lambda_2$.  If $G$ has girth $g\geq 4$ and 
\[d-\lambda_2 \geq \frac{2(d-2)g}{n_0\paren{d - \frac{2}{r-1},g}},\]
where $r = \floor{\nicefrac{g}{2}}$ and $n_0$ is as in the Irregular Moore Bound, then $\cegK(G) = (d-2)g$.
If $G$ has girth $g=3$, the cyclic-edge connectivity exists, and 
\[
d-\lambda_2  \geq  6 - \frac{12}{d},
\] then  $\cegK(G) = 3d-6$.}
\end{theorem}

% \begin{theorem} \label{thm:main}
% Let $G$ be a $d$-regular graph with $d \geq 5$ and second largest adjacency eigenvalue {{$\lambda_2$}}.  If the \ceg\ exists, the girth is at least 4, and 
% \[d-\lambda_2 \geq \frac{2(d-2)g}{n_0\paren{d - \frac{2}{r-1},g}}\]
% %\[\frac{2(d-2)g}{d-\lambda} \leq n_0\paren{d - \frac{2}{r-1},g}\] 
% where $r = \floor{\nicefrac{g}{2}}$ \textcolor{blue}{and $n_0$ is defined in Eq. \ref{eq:n0} of the Irregular Moore Bound}, then 
% \[
% {
% \cegK(G) \geq (d-2)g.}
% \]
% If the girth is 3 and {$d-\lambda_2  \geq  6 - \nicefrac{12}{d}$}, then $\cegK(G) \geq 3d-6$.
% \end{theorem}
\begin{proof}
%\sjy{This is a sketch at this point}
Let $S$ be a minimal set of edges such that $G-S$ is disconnected with all components containing a cycle and that $\size{S} < (d-2)g$.  By minimality, we may assume that there is some set of vertices $X$ such that $S = E(X,\overline{X})$, $\size{X} \leq \size{\overline{X}}$, and $H = G[X]$ contains a cycle.  Further, we may assume that $X$ is the minimal cardinality set yielding an edge cut of size $\size{S}$.  Now, by {Lemma \ref{lem:edgebound}}, we have that 
\[ e(X,\overline{X}) \geq (d-{{\lambda_2}}) \frac{\size{X} \size{\overline{X}}}{n} \geq (d-{{\lambda_2}}) \frac{\size{X}}{2}.\]  Thus, to prove the desired result for $g\geq 4$, { by Lemma \ref{L:upper} (or Lemma 1 in \cite{lou1993lower})} it suffices to show that
\[ \size{X} \geq n_0\paren{d-\frac{2}{r-1},g} \geq \frac{2(d-2)g}{d-{{\lambda_2}}}.\]  

We first observe that if $H$ is a simple cycle, then $\size{X} \geq g$ and $e(X,\overline{X}) \geq (d-2)g$, as desired.  Thus assume without loss of generality that $H$ is not a cycle.  Further note that, if $x \in X$ is such that there is some cycle $C \in G[X - \set{x}]$, then the degree of $x$ in $H$ is at least $\ceil{\frac{d}{2}}$ %\kempton{should this be set off as its own lemma or claim so that it is easier to refer to later on?} 
as otherwise $G[X - \set{x}]$ contains the cycle $C$, $G[\overline{X}\cup\set{x}]$ contains the same cycle as $G[\overline{X}]$, and $e(X-\set{x},\overline{X}\cup \set{x}) \leq e(X,\overline{X})$. As a consequence, the minimum degree in $H$ is at least 2.  

At this point it is possible to observe that $H$ is 2-edge-connected.  Specifically, suppose that there exists two disjoint sets $X_1$ and $X_2$ such that $X_1 \cup X_2 = X$ and $e(X_1,X_2) \leq 1$.  As the minimum degree in $H$ is at least 2, both $G[X_1]$ and $G[X_2]$ contain some cycle.   By minimality of the edge cut and that $X_1,X_2 \subset X$, we have that $e(X_1,\overline{X_1}),e(X_2,\overline{X_2}) \geq e(X,\overline{X})+1$, thus 
\[ e(X,\overline{X})  = e(X_1,\overline{X_1}) + e(X_2,\overline{X_2}) - 2 \geq 2e(X,\overline{X}),\] a contradiction. 

%From what we have shown so far, it follows that the average degree of $H$ is strictly larger than 2.  However, we will be able to say something stronger.  That is, there is a fixed $\epsilon>0$ independent of the size of the graph such that the average degree of $H$ is at least $2+\epsilon.$  We prove this below. \kempton{Check the above paragraph if that is how we want to express what we had talked about.}

Since $H$ is 2-edge-connected, there exists an ear decomposition for $H$~\cite{robbins1939theorem}. 
Specifically, there exists a cycle $C \in H$ as well as paths $P_1, \ldots, P_k$ such that the internal vertices of $P_i$ are disjoint from $C \cup \paren{\bigcup_{j=1}^{i-1} P_j}$ and $H = C \cup \paren{\bigcup_{j=1}^k P_j}$.  
Since we may assume that $H$ is not a cycle, we have that there is a non-zero number of paths in the ear decomposition.  
So we may consider the last path in the ear decomposition, $P_k$.  By construction of the ear decomposition, any internal vertex in $P_k$ will have degree 2 in $H$ but will be disjoint from the cycle $C$.  
But as such vertices have degree at least $\ceil{\frac{d}{2}}$ and  $d \geq 5$, no such vertex exists and $P_k$ is a single edge $e=\set{x,y}$.  
By the properties of the ear decomposition $H - e$  is a 2-edge-connected graph and hence there are at least two, edge-disjoint, paths between $x$ and $y$ in $H - e$, denote them by $Q$ and $Q'$.  
Without loss of generality we assume that the total length of $Q$ and $Q'$ is minimized.  
Now suppose there exists vertices $a$ and $b$ that are on both $Q$ and $Q'$, but in opposite orders.  
That is, $Q = x Q_1 a Q_2 b Q_3 y$ and $Q' = x Q_1' b Q_2' a Q_3'y$.  
We can then construct two new walks from $x$ to $y$, $x Q_1 a Q_3' y$ and $x Q_1' b Q_3 y$ which have total shorter length.   Thus, if $x= a_0, a_1, \ldots, a_{t-1}, a_{t} = y$ are the intersection points of $Q$ and $Q'$, they occur in the same order in both $Q$ and $Q'$.  
As a consequence, $Q \cup Q'$ can be thought of as a series of vertex incident cycles $C_0, C_1, \ldots, C_t$ such that $a_j,a_{j+1} \in C_j$ and $C_{i-1} \cap C_{i} = \set{a_i}$.  
Now for every vertex $v$ in $H$ (except potentially $x,y$ if $t = 1$ and $a_1$ if $t=2$), there is some cycle not containing $v$ and hence the degree of $v$ is at least $\ceil{\frac{d}{2}} \geq 3$.  
As the degree of $x$ and $y$ are at least $2$ and the degree of $a_1$ is at least $4$, this implies that average degree of $H$ is at least $2 + \epsilon$ for some $\epsilon > 0$. 

As $H$ has average degree $2+\epsilon$ and girth at least $g$, by the Irregular Moore Bound, we have that $\size{X} \geq n_0(2+\epsilon,g)$.  But then, since $G$ is $d$-regular and $e(X,\overline{X}) < (d-2)g$, we have the average degree is at least \[ d - \frac{(d-2)g}{n_0(2+\epsilon,g)}.\]  In particular, we have that $\epsilon$ must satisfy that \[(d-2-\epsilon) n_0(2+\epsilon,g) < (d-2)g.\] 

Consider first the case where $g = 2r \geq 4$, and note that
\begin{align*}
(d-2-\epsilon)n_0(2+\epsilon,2r) &= (d-2-\epsilon) \paren{ 2 \sum_{j=0}^{r-1} (1+\epsilon)^j} \\
&= 2(d-2-\epsilon) \frac{(1+\epsilon)^r - 1}{\epsilon} \\
&\geq 2(d-2-\epsilon)\paren{r + \binom{r}{2}\epsilon}.
\end{align*}
Thus we need to have \[ 2(d-2-\epsilon)\paren{r + \binom{r}{2}\epsilon} < 2r(d-2),\] which can be rearranged to 
\[ \binom{r}{2} \epsilon \paren{d - 2 - \frac{2}{r-1} - \epsilon} < 0.\]  As we already have that $\epsilon > 0$, this implies that $\epsilon > d - 2 - \frac{2}{r-1}$ and thus $\size{X} \geq n_0\paren{d - \frac{2}{r-1},2r}.$

Finally consider the case where $g = 2r + 1 \geq 5.$   In this case we have that 
\begin{align*}
    (d-2-\epsilon)n_0(2+\epsilon,2r+1) &= (d-2-\epsilon)\paren{1 + (2+\epsilon)\sum_{j=0}^{r-1}(1+\epsilon)^j }\\
    &\geq (d-2-\epsilon)\paren{1+(2+\epsilon)\sum_{j=0}^{r-1} (1+j\epsilon)} \\
    &= (d-2-\epsilon)\paren{1 + (2+\epsilon)\paren{r + \binom{r}{2}\epsilon}} \\
    &= (d-2-\epsilon)\paren{1 + 2r + \paren{2\binom{r}{2} + r}\epsilon + \binom{r}{2}\epsilon^2} \\
    &= (d-2-\epsilon)\paren{1+2r + r^2\epsilon + \binom{r}{2}\epsilon^2}
\end{align*}
Thus, we have that $\epsilon$ satisfies that
\[\epsilon\paren{ \paren{(d-2)r^2 - g} + \paren{(d-2)\binom{r}{2} - r^2}\epsilon - \binom{r}{2}\epsilon^2} < 0.\]
Letting \[ f(\epsilon) = -\binom{r}{2}\epsilon^2 + \paren{(d-2)\binom{r}{2} -r^2}\epsilon + (d-2)r^2 - g,\] it is easy to see that 
\begin{align*}
\lim_{\epsilon \rightarrow -\infty} f(\epsilon) &= -\infty, \\
f(0) &= (d-2)r^2 - 2r - 1 > 0, \\
f(d-2) &= -g,\quad\textrm{and} \\
\lim_{\epsilon \rightarrow \infty} f(\epsilon) &= -\infty\\
\end{align*}
Thus $f(\epsilon)$ has one root in $(-\infty,0)$ and one in $(0,d-2)$.  Let $\epsilon^*$ be the root of $f(\epsilon)$ in $(0,d-2)$, then we have that $\epsilon \geq \epsilon^*$ and $\size{X} \geq n_0(2 + \epsilon^*, 2r+1).$  Observing that,
\begin{align*}
    f\paren{d-2-\frac{2}{r-1}} &= -\binom{r}{2}\paren{d-2-\frac{2}{r-1}}^2 + \paren{(d-2)\binom{r}{2} - r^2}\paren{d-2-\frac{2}{r-1}} + (d-2)r^2 - g \\
    &= \binom{r}{2}\paren{-\paren{d-2-\frac{2}{r-1}}^2 + \paren{d-2-\frac{2}{r-1}}\paren{d-2-\frac{2r}{r-1}}+ \frac{2(d-2)r}{r-1} - \frac{g}{\binom{r}{2}}}\\
    &= \binom{r}{2}\paren{\paren{d-2-\frac{2}{r-1}}\frac{2-2r}{r-1}+ \frac{2(d-2)r}{r-1} - \frac{g}{\binom{r}{2}}}\\
    &= \binom{r}{2}\paren{\frac{2(d-2)}{r-1} + \frac{4r-4}{(r-1)^2}- \frac{4r +2}{r(r-1)}} \\
    &= \frac{2(d-2)(r-1)r + (4r-4)r - (4r+2)(r-1)}{2(r-1)} \\
    &= \frac{2(d-2)(r-1)r + 2 - 2r}{2(r-1)} \\
    &= (d-2)r - 1\\
    &> 0
\end{align*}
 
Thus $\epsilon^* > d - 2 - \frac{2}{r-1}$ and $\size{X} > n_0\paren{d - \frac{2}{r-1},2r+1}$.

Finally, the case for $g = 3$ proceeds similarly as above except that $n_0(2+\epsilon,3) = 3+\epsilon$ and thus $\epsilon^*$ can be determined to be exactly $d-5$, { yielding $\cegK(G) \geq 3d-6$.  The matching upper bound on $\cegK(G)$ for girth 3 follows immediately by applying Lemma 1 from \cite{lou1993lower}. }
\end{proof}

% \sout{Combining these results yields the following spectral condition for when the \ceg\ of a graph is given by a cut induced by a minimum length cycle.}

% \begin{corollary}
% \sout{{\color{red} If $G$ is a $d$-regular graph with $d \geq 5$, girth $g\geq 4$, and second largest eigenvalue of the adjacency matrix {$\lambda_2$}, satisfies
% \[ {d-{\lambda_2}} \geq \frac{2(d-2)g}{n_0\paren{d - \frac{2}{r-1},g}},\]
% where $r = \floor{\nicefrac{g}{2}}$, then 
% \[
% {\cegK(G)=(d-2)g.}
% \]
% %the \ceg\ is $(d-2)g.$
% }}
% \end{corollary}

Given the relatively weak spectral condition required in Theorem \ref{thm:main} and the extensive use of the Irregular Moore Bound in the proof, one might naturally wonder whether any spectral condition is required at all.  Here we briefly provide a family of $d$-regular examples (for $d \geq 5$) showing that, for girth 4 at least, the spectral condition is required.  

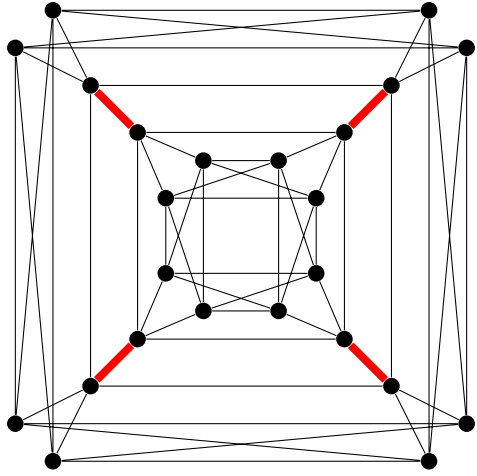
\begin{figure}[t!]
    \centering
    \begin{tikzpicture}
    \def\step{.5}
    \node[vtx] (d1) at (-2*\step,\step) {};
    \node[vtx] (d2) at (\step,2*\step) {};
    \node[vtx] (d3) at (2*\step,-\step) {};
    \node[vtx] (d4) at (-\step,-2*\step) {};
    \node[vtx] (d5) at (-\step,2*\step) {};
    \node[vtx] (d6) at (2*\step,\step) {};
    \node[vtx] (d7) at (\step,-2*\step) {};
    \node[vtx] (d8) at (-2*\step,-\step) {};
    \draw (d1) -- (d2) -- (d3) -- (d4) -- (d5) -- (d6) -- (d7) -- (d8) -- (d1) -- (d4) -- (d7) -- (d2) -- (d5) -- (d8) -- (d3) -- (d6) -- (d1);
    \node[vtx] (c1) at (-2.75*\step,2.75*\step) {};
    \node[vtx] (c2) at (2.75*\step,2.75*\step) {};
    \node[vtx] (c3) at (2.75*\step,-2.75*\step) {};
    \node[vtx] (c4) at (-2.75*\step,-2.75*\step) {};
    \draw (d1) -- (c1) -- (d5);
    \draw (d2) -- (c2) -- (d6);
    \draw (d3) -- (c3) -- (d7);
    \draw (d4) -- (c4) -- (d8);
    \draw (c1) -- (c2) -- (c3) -- (c4) -- (c1);
    \node[vtx] (b1) at (-4*\step,4*\step) {};
    \node[vtx] (b2) at (4*\step,4*\step) {};
    \node[vtx] (b3) at (4*\step,-4*\step) {};
    \node[vtx] (b4) at (-4*\step,-4*\step) {};
    \draw (b1) -- (b2) -- (b3) -- (b4) -- (b1);
    \draw[red,line width = 3] (b1) -- (c1);
    \draw[red,line width = 3] (b2) -- (c2);
    \draw[red,line width = 3] (b3) -- (c3);
    \draw[red,line width = 3] (b4) -- (c4);
    \node[vtx] (a1) at (-6*\step,5*\step) {};
    \node[vtx] (a2) at (5*\step,6*\step) {};
    \node[vtx] (a3) at (6*\step,-5*\step) {};
    \node[vtx] (a4) at (-5*\step,-6*\step) {};
    \node[vtx] (a5) at (-5*\step,6*\step) {};
    \node[vtx] (a6) at (6*\step,5*\step) {};
    \node[vtx] (a7) at (5*\step,-6*\step) {};
    \node[vtx] (a8) at (-6*\step,-5*\step) {};
    \draw (a1) -- (a2) -- (a3) -- (a4) -- (a5) -- (a6) -- (a7) -- (a8) -- (a1) -- (a4) -- (a7) -- (a2) -- (a5) -- (a8) -- (a3) -- (a6) -- (a1);
    \draw (a1) -- (b1) -- (a5);
    \draw (a2) -- (b2) -- (a6);
    \draw (a3) -- (b3) -- (a7);
    \draw (a4) -- (b4) -- (a8);
    \end{tikzpicture}
    \caption{The 5-regular\color{black}\ graph from Example \ref{ex:8edgecut}.  The edge cut of size 4 is bolded and highlighted in red.}
    \label{fig:example}
\end{figure}

\begin{example}\label{ex:8edgecut}

We first note that for a $d$-regular graph with girth $g =4$, \[n_0\paren{d-\frac{2}{r-1},g} = n_0(d-2,4) = 2\paren{1+(d-3)} = 2d-4,\] and thus if $\lambda_2 \leq d - \frac{4d-8}{d-2} = d-4$ then the cyclic edge connectivity is $4d-8$.   In contrast to this, our family of examples has second largest eigenvalue at least $d - \frac{5-\sqrt{17}}{2} > d-4$ and has cyclic edge connectivity {$2d-6$}. 

We begin our construction with the odd case.  In this case the graph can be partitioned into 4 sets, 
\begin{align*}
    A &= \set{a_1,\ldots, a_{4d-12}} \\
    B &= \set{b_1,\ldots,b_{2d-6}} \\
    C &= \set{c_1,\ldots,c_{2d-6}} \\
    D & =\set{d_1,\ldots,d_{4d-12}}.
\end{align*}
Two vertices $a_i, a_j \in A$ are adjacent if $i$ and $j$ differ (cyclically) by one of $1,3,\ldots,d-2$.  A similar relation holds for pairs of vertices in $D$.  Two vertices $b_i$ and $b_j$ are adjacent if $i$ and $j$ differ (cyclically) by one of $1,3,\ldots, d-4$.  All remaining edges are between $A$ and $B$, $B$ and $C$, or $C$ and $D$.  Specifically, $b_i$ is adjacent to $a_i$, $a_{i + d-3}$, and $c_i$.  A analogous relationship holds for vertices in $C$.  It easy to verify that this graph has degree $d$ and girth $4$.  Furthermore, the edge cut between $B$ and $C$ has size $d-1$ with both components containing many cycles.  The $d=5$ case is depicted in Figure \ref{fig:example}.

In the case that $d$ is even, we first generate the graph for degree $d+1$ and then remove a perfect matching contained in each of the sets $A$, $B$, $C$, and $D$.  The existence of such a perfect matching is easily observed by noting that each set contains a even-length cycle through all the vertices. 

Finally we turn to the second largest eigenvalue of each of these graphs.  We note that in both the odd and even cases the sets $A$, $B$, $C$, and $D$ form an equitable partition of the graph with associated equitable partition matrix \[\mathcal{E} = \left[ \begin{matrix} d-1 & 1 & 0 & 0 \\ 2 & d-3 & 1 & 0 \\ 0 & 1 & d-3 & 2 \\ 0 & 0 & 1 & d-1 \end{matrix}\right].\]
The spectrum of $\mathcal{E}$ is $\set{d, d - \frac{5 \pm \sqrt{17}}{2}, d-3}$. Since this is an equitable partition of the graph, the spectrum of the graph includes $d - \frac{5-\sqrt{17}}{2}$.  Furthermore, as the eigenvalues not associated with $\mathcal{E}$ are orthogonal to the indicator vector for each part in the partition (see for instance Section 9.3 of~\cite{Godsil:AGT}), the largest eigenvalue not induced by $\mathcal{E}$ is bounded above by the maximum eigenvalue of 
\[ \left[\begin{matrix} \lambda_2(A) & 1 & 0 & 0 \\ 2 & \lambda_2(B) &  1 & 0 \\ 0 & 1 & \lambda_2(C) & 2 \\ 0 & 0 & 1 & \lambda_2(D) \end{matrix} \right], \]
where $A$, $B$, $C$, and $D$ are the subgraphs induced by the respective sets.  

In the case that $d$ is odd, $A$, $B$, $C$, and $D$ are all Abelian Cayley graphs over a cyclic group and so it straightforward to derive that
\begin{align*}
    \lambda_2(A) = \lambda_2(D) &= \sum_{j=1}^{\frac{d-1}{2}} 2\cos\paren{\frac{(2j-1)\pi}{2d-6}} = \frac{\sin\paren{\frac{(d-1)\pi}{2d-6}}}{\sin\paren{\frac{\pi}{2d-6}}} < d-1\\
    \lambda_2(B) = \lambda_2(C) &= \sum_{j=1}^{\frac{d-3}{2}} 2\cos\paren{\frac{(2j-1)\pi}{d-3}} = 0.
\end{align*}
The largest eigenvalue of 
\[ \left[ \begin{matrix} d -1 & 1 & 0 & 0 \\ 2  & 0 & 1 & 0 \\ 0 & 1 & 0 & 2 \\ 0 & 0 & 1 & d-1\end{matrix}\right] \] 
is $\frac{\sqrt{d^2 -4d +12} + d}{2}$ which is equal to $d - \frac{5-\sqrt{17}}{2}$ when $d =5$ and strictly smaller for larger values of $d$.  

As the example for even $d$ results from removing a repeated matching, the Courant-Weyl inequalities gives that $\lambda_2(B), \lambda_2(C) \leq 1$ while maintaining that $\lambda_2(A),\lambda_2(D) < d-2$. Thus, a similar argument as above yields that for all even $d \geq 4$, the second eigenvalue of every graph in the family is $d - \frac{5-\sqrt{17}}{2}.$
 
\end{example}

We note that the while the above example does not show that Theorem \ref{thm:main} is tight, it does however show that (at least  for girth 4) the bound is of the correct order. \\

\noindent {\bf Acknowledgements.} The authors would like to thank Carlos Ortiz-Marrero for helpful discussions, and anonymous referees for thoughtful comments which improved the manuscript. \\

{ \noindent {\bf Declarations.} This work was supported by the High Performance Data Analytics (HPDA) program at Pacific Northwest National Laboratory.  Pacific Northwest National Laboratory is operated by Battelle Memorial Institute under Contract DE-ACO6-76RL01830. \textit{PNNL Information Release:} PNNL-SA-151831 .  The authors declare they have no competing interests.}

\bibliographystyle{siam}
\bibliography{scRefs}
\end{document}